\DeclareMathOperator{\dx}{dx}
\DeclareMathOperator{\dy}{dy}
\DeclareMathOperator{\dz}{dz}
\newcommand{\cu}{\mathcal{u}}
\newcommand{\cU}{\mathcal{U}}
\newcommand{\bbR}{\mathbb{R}}
\newtheorem{thrm}{Theorem}[section]
\newtheorem{lem}[thrm]{Lemma}
\theoremstyle{definition}
\newtheorem{definition}[thrm]{Definition}
\newtheorem{remark}[thrm]{Remark}
\numberwithin{equation}{section}
\author{Leobardo Rosales}
\address{Keimyung University \\
Department of Mathematics \\
1095 Dalgubeol-daero \\
Daegu, Republic of Korea, 42601}
\email{rosales.leobardo@gmail.com}
\thanks{This work was partly conducted by the author at the Korea Institute for Advanced Study, as an Associate Member.}
\keywords{partial differential equations, divergence form, Hopf boundary point lemma}
\subjclass{Primary 35B50, Secondary 35A07}
\begin{document}

\title[Hopf's lemma]{Generalizing Hopf's boundary point lemma}

\begin{abstract}
We give a Hopf boundary point lemma for weak solutions of linear divergence form uniformly elliptic equations, with H$\ddot{\text{o}}$lder continuous top-order coefficients and lower-order coefficients in a Morrey space. 
\end{abstract}
\maketitle

\section{Introduction} \label{introduction}

We illustrate how the Hopf boundary point lemma can be proved for divergence form equations, given sufficient regularity of the coefficients. Here, we show the case when the top-order coefficients are H$\ddot{\text{o}}$lder continuous, while the lower-order terms are in a \emph{Morrey space} (see Definition \ref{Morreyspace}).

The well-known Hopf boundary point lemma states that if $\cu \in C(B_{1}(0)) \cap C^{2}(\overline{B_{1}(0)})$ satisfies a second-order linear equation
$$\sum_{i,j=1}^{n} a^{ij} D_{i}D_{j} \cu + \sum_{i=1}^{n} c^{i}(x) D_{i} \cu + d \cu= 0$$
over $B_{1}(0),$ for functions $a^{ij}=a^{ji},c^{i},d \in L^{\infty}(B_{1}(0))$ for $i,j \in \{ 1,\ldots,n \}$ with $\{ a^{ij} \}_{i,j=1}^{n}$ \emph{uniformly elliptic} over $B_{1}(0)$ with respect to some $\lambda \in (0,\infty)$ (see Definition \ref{uniformlyelliptic}), and if $\cu(x)>\cu(-e_{n})=0$ for all $x \in B_{1}(0),$ then
\begin{equation} \label{liminf}
\liminf_{h \searrow 0} \frac{\cu((h-1)e_{n})}{h} > 0.
\end{equation}
See the proof given by Hopf in \cite{H52}, as well as Lemma 3.4 of \cite{GT83}.

It is useful to have the Hopf boundary point lemma for divergence form equations. We consider $\cu \in C(\overline{B_{1}(0)}) \cap W^{1,2}(B_{1}(0))$ a weak solution over $B_{1}(0)$ of the equation
\begin{equation} \label{equation}
\sum_{i,j=1}^{n} D_{i} \left( a^{ij} D_{j}\cu +b^{i} \cu \right) + \sum_{i=1}^{n} c^{i} D_{i}\cu + d\cu = 0
\end{equation}
for functions $a^{ij},c^{i} \in L^{2}(B_{1}(0))$ and $b^{i},d \in L^{1}(B_{1}(0))$ for each $i,j \in \{ 1,\ldots,n \}$ (see Definition \ref{weaksolution}). Assuming again that $\cu(x) > \cu(-e_{n})=0$ for all $x \in B_{1}(0),$ the aim is to show \eqref{liminf} holds.

The most recent result is given by Theorem 1.1 of \cite{S15}, which shows \eqref{liminf} holds if $a^{ij}=a^{ji} \in C^{0,\alpha}(\overline{B_{1}(0)})$ for some $\alpha \in (0,1)$ for $i,j \in \{ 1,\ldots,n \}$ with $\{ a^{ij} \}_{i,j=1}^{n}$ uniformly elliptic with respect to some $\lambda \in (0,\infty)$ (see Definition \ref{uniformlyelliptic}), and $b^{i}=0$ while $c^{i},d \in L^{\infty}(B_{1}(0))$ for each $i \in \{ 1,\ldots,n \}.$ We as well refer the reader to \cite{S15} which discusses previous generalizations of the Hopf boundary point lemma, and gives examples showing the assumption $a^{ij} \in C^{0,\alpha}(\overline{B_{1}(0)})$ for each $i,j \in \{ 1,\ldots,n \}$ cannot be relaxed.

Here, we prove in Theorem \ref{hopflemma} that \eqref{liminf} holds under the more general assumption that the coefficients in \eqref{equation} satisfy for each $i,j \in \{ 1,\ldots,n \}$
$$\begin{array}{ccc}
a^{ij},b^{i} \in C^{0,\alpha}(\overline{B_{1}(0)}), & c^{i} \in L^{q}(B_{1}(0)), & d \in L^{\frac{q}{2}}(B_{1}(0)) \cap L^{1,\alpha}(B_{1}(0))
\end{array}$$
for some $q>n$ and $\alpha \in (0,1)$; see Remark \ref{hopfremark}(i). We also assume $a^{ij}(-e_{n})=a^{ji}(-e_{n})$ for each $i,j \in \{ 1,\ldots,n \}$ with $\{ a^{ij} \}_{i,j=1}^{n}$ uniformly elliptic over $B_{1}(0)$ with respect to some $\lambda \in (0,\infty).$ Additionally, we assume $\{ b^{i} \}_{i=1}^{n},d$ are \emph{weakly non-positive} over $B_{1}(0)$ (see Definition \ref{weaklynonpositive}).

The space $L^{1,\alpha}(B_{1}(0))$ denotes a \emph{Morrey space} (see Definition \ref{Morreyspace}). Morrey spaces were introduced in \cite{M66} to study the existence and regularity of solutions to elliptic systems. Consequentially, to prove Theorem \ref{hopflemma} we must use the $C^{1,\alpha}$ estimate of Theorem 5.5.5'(b) of \cite{M66}, stated here for convenience as Lemma \ref{Morreyestimate}.

Since their introduction, Morrey spaces have been studied in and outside the study of partial differential equations. Recent work has been done in the study of elliptic and parabolic partial differential equations involving data in the $L^{1,\alpha}$ Morrey space. We refer to the seminal work in this direction given by \cite{M07}, which uses Morrey spaces to prove regularity results for solutions to non-linear divergence-form elliptic equations having inhomogeneous term a measure. To see further, recent work resulting from and related to \cite{M07}, using $L^{1,\alpha}$ Morrey spaces to study elliptic and parabolic equations in various settings, we refer the reader to the works: \cite{AKM17}, \cite{CCAL17}, \cite{CAL17}, \cite{CL06}, \cite{CL10}, \cite{CL14}, \cite{CLS08}, \cite{KM10}, \cite{L11}, \cite{L14}, \cite{LS10}, \cite{LS11}, \cite{LS12}, \cite{LS15}.

Our underlying goal is to illustrate how the Hopf boundary point lemma can be shown in other settings for divergence form equations. To this end, the proof of Theorem \ref{hopflemma} is given in five steps demonstrating the necessary theoretical ingredients. The structure of the proof is taken from the proof of Lemma 10.1 of \cite{HS79}, which shows one generalization of the Hopf boundary point lemma to divergence form equations.

We only assume working knowledge of real analysis and ready \emph{access} to the reference \cite{GT83}. Otherwise, the crucial estimate Theorem 5.5.5'(b) we carefully state in the present setting as Lemma \ref{Morreyestimate} in Section \ref{estimateandexistencelemmas}. We begin by stating in Section \ref{preliminaries} our basic definitions and some preliminary calculations needed in Section \ref{estimateandexistencelemmas} to prove the necessary existence result Lemma \ref{Morreyexistence}. We also state in Section \ref{estimateandexistencelemmas} the weak maximum principle needed, Lemma \ref{weakmaximumprinciple}. In Section \ref{thehopfboundarypointlemma} we prove the Hopf boundary point lemma, Theorem \ref{hopflemma}.

\section{Preliminaries} \label{preliminaries}

We will work in $\bbR^{n}$ with $n \geq 2.$ We denote the volume of the open unit ball $B_{1}(0) \subset \bbR^{n}$ by $\omega_{n} = \int_{B_{1}(0)} \dx.$ Standard notation for the various spaces of functions shall be used; in particular $C^{1}_{c}(\cU,[0,\infty))$ shall denote the set of non-negative continuously differentiable functions with compact support in an open set $\cU \subseteq \bbR^{n}.$

We begin by giving the definition of a family of Morrey spaces, to which we will relax the assumptions on the lower-order terms given in \cite{S15}.

\begin{definition} \label{Morreyspace}
Suppose $\alpha \in (0,1)$ and $\cU \subseteq \bbR^{n}$ is an open set. We say $d \in L^{1,\alpha}(\cU)$ if $d \in L^{1}(\cU)$ with finite $L^{1,\alpha}(\cU)$ norm, defined by
$$\| d \|_{L^{1,\alpha}(\cU)} := \sup_{x \in \bbR^{n}, \rho \in (0,\infty)}  \frac{1}{\rho^{n-1+\alpha}} \int_{\cU \cap B_{\rho}(x)} |d(y)| \dy.$$
\end{definition}

\begin{remark} \label{Morreyspaceremark}
If $q>n,$ $\cU \subset \bbR^{n}$ is a bounded open set, and $c \in L^{q}(\cU),$ then $c \in L^{1,\alpha}(\cU)$ for $\alpha = 1-\frac{n}{q} \in (0,1),$ with
$$\| c \|_{L^{1,\alpha}(\cU)} \leq \left( \int_{\cU} \dx \right)^{1-\frac{1}{q}} \| c \|_{L^{q}(\cU)}.$$
\end{remark}

Next, we state what it means for $\cu$ to be a weak \emph{supersolution} (respectively \emph{solution}, \emph{subsolution}) to a linear divergence form equation. The assumptions on the coefficients are to ensure integrability.

\begin{definition} \label{weaksolution}
Let $\cU \subset \bbR^{n}$ be an open set, and suppose $a^{ij},c^{i} \in L^{2}(\cU),$ $b^{i},d,g,f^{i} \in L^{1}(\cU)$ for each $i,j \in \{ 1,\ldots,n\}.$ We say $\cu \in L^{\infty}(\cU) \cap W^{1,2}(\cU)$ \emph{is a weak solution over $\cU$ of the equation}
$$\sum_{i,j=1}^{n} D_{i} \left( a^{ij} D_{j}\cu + b^{i}\cu \right) + \sum_{i=1}^{n} c^{i} D_{i}\cu + d\cu \leq g+\sum_{i=1}^{n} D_{i} f^{i}$$
(respectively $\geq,=$) if for all $\zeta \in C^{1}_{c}(\cU;[0,\infty))$ 
$$\begin{aligned}
\int \sum_{i,j=1}^{n} a^{ij} D_{j} \cu D_{i} \zeta + \sum_{i=1}^{n} & \left( b^{i} \cu D_{i} \zeta - c^{i} (D_{i} \cu) \zeta \right) - d \cu \zeta \dx \\
& \geq \int -g \zeta + \sum_{i=1}^{n} f^{i} D_{i} \zeta \dx
\end{aligned}$$
(respectively $\leq,=$).
\end{definition}

The next two definitions should be regarded as holding throughout. 

\begin{definition} \label{uniformlyelliptic}
Let $\lambda \in (0,\infty),$ $\cU \subseteq \bbR^{n},$ and suppose we have functions $a^{ij}: \cU \rightarrow \bbR$ for each $i,j \in \{ 1,\ldots,n \}.$ We say $\{ a^{ij} \}_{i,j=1}^{n}$ are \emph{uniformly elliptic over $\cU$ with respect to $\lambda$} if
$$\sum_{i,j=1}^{n} a^{ij}(x) \xi_{i} \xi_{j} \geq \lambda |\xi|^{2} \text{ for each } x \in \cU \text{ and } \xi \in \bbR^{n}.$$
\end{definition}

\begin{definition} \label{weaklynonpositive}
Let $\cU \subseteq \bbR^{n}$ be an open set, and suppose $b^{i},d \in L^{1}(\cU)$ for each $i \in \{ 1,\ldots,n \}.$ We say $\{ b^{i} \}_{i=1}^{n},d$ are \emph{weakly non-positive over $\cU$} if
$$\int d \zeta - \sum_{i=1}^{n} b^{i} D_{i} \zeta \dx \leq 0$$
for each $\zeta \in C^{1}_{c}(\cU,[0,\infty)).$
\end{definition}

Proving Theorem \ref{hopflemma} will require the existence result Lemma \ref{Morreyestimate}, which in turn we prove using a well-known existence result given by Theorem 8.34 of \cite{GT83}.We must thus discuss mollification and Morrey spaces. 

\begin{definition} \label{mollification}
Let $\Omega = B_{1}(0) \setminus \overline{B_{\frac{1}{2}}(0)},$ and fix $v \in C^{\infty}_{c}(B_{1}(0);[0,\infty))$ a standard mollifier. For $\delta \in (0,\frac{1}{4})$ denote $v_{\delta}(x) = \frac{1}{\delta^{n}} v(\frac{x}{\delta})$ and define $\gamma_{\delta}: \bbR^{n} \setminus \{ 0 \} \rightarrow \bbR^{n}$ by
$$\gamma_{\delta}(x) = ((1-4\delta)|x| + 3 \delta) \frac{x}{|x|} = (1-4 \delta)x + 3 \delta  \frac{x}{|x|}.$$
Using these functions, we make the following definitions.
\begin{enumerate}
 \item[(i)] Given $d \in L^{1}(\Omega),$ we extend $d(y)=0$ for $y \in \bbR^{n} \setminus \Omega$ and define the usual convolution $d \ast v_{\delta}: \bbR^{n} \rightarrow \bbR.$ We also define the weighted convolution $d \circledast v_{\delta}: \bbR^{n} \setminus \{0\} \rightarrow \bbR$ by $d \circledast v_{\delta} = J \gamma_{\delta} \big( (d \ast v_{\delta}) \circ \gamma_{\delta} \big)$ where $J\gamma_{\delta} = |\det(D \gamma_{\delta})|.$
 \item[(ii)] Given $\{ b^{i} \}_{i=1}^{n} \subset C^{0,\alpha}(\overline{\Omega}),$ define $b^{i} \star v_{\delta}: \overline{\Omega} \rightarrow \bbR$ for $i \in \{ 1,\ldots,n \}$ by
$$b^{i} \star v_{\delta} = \sum_{j=1}^{n} \Big( (D_{j}( e_{i} \cdot \gamma^{-1}_{\delta})) \circ \gamma_{\delta} \Big) \cdot (b^{j} \circledast v_{\delta}).$$
\end{enumerate}
\end{definition}

We will use these convolutions to prove the existence result Lemma \ref{Morreyexistence}. For this, we need the following calculations.

\begin{lem} \label{mollificationproperties}
Denote $\Omega = B_{1}(0) \setminus \overline{B_{\frac{1}{2}}(0)}.$ Suppose $b^{i} \in C^{0,\alpha}(\overline{\Omega})$ for $i \in \{ 1,\ldots,n \}$ and $d \in L^{1,\alpha}(\Omega)$ with $\alpha \in (0,1).$
\begin{enumerate}
 \item[(i)] For $\delta \in (0,\frac{1}{8})$ the convolutions satisfy
\begin{itemize}
 \item $d \ast v_{\delta} \in C^{\infty}(\bbR^{n}),$ $\| d \ast v_{\delta} \|_{L^{1,\alpha}(\Omega)} \leq \| d \|_{L^{1,\alpha}(\Omega)},$ \\ and $d \ast v_{\delta} \rightarrow d \text{ in } L^{1}(\Omega)$ as $\delta \searrow 0.$
 \item $d \circledast v_{\delta} \in C^{\infty}(\bbR^{n} \setminus \{0\}),$ $\| d \circledast v_{\delta} \|_{L^{1,\alpha}(\Omega)} \leq 2^{n} \| d \|_{L^{1,\alpha}(\Omega)},$ \\ and $d \circledast v_{\delta} \rightarrow d \text{ in } L^{1}(\Omega)$ as $\delta \searrow 0.$ 
 \item There exists $C_{\ref{mollificationproperties}} = C_{\ref{mollificationproperties}}(n) \in (0,\infty)$ so that \\ $b^{i} \star v_{\delta} \in C^{\infty}(\overline{\Omega}),$ $\| b^{i} \star v_{\delta} \|_{C^{0,\alpha}(\overline{\Omega})} \leq C_{\ref{mollificationproperties}} \sum_{j=1}^{n} \| b^{j} \|_{C^{0,\alpha}(\overline{\Omega})},$ \\ and $b^{i} \star v_{\delta} \rightarrow b^{i}$ in $L^{1}(\Omega)$ as $\delta \searrow 0$ for each $i \in \{ 1,\ldots,n \}.$
\end{itemize}
 \item[(ii)] If $\{ b^{i} \}_{i=1}^{n},d$ are weakly non-positive over $\Omega,$ then $\{ b^{i} \star v_{\delta} \}_{i=1}^{n},d \circledast v_{\delta}$ are weakly non-positive over $\Omega.$
\end{enumerate}
\end{lem}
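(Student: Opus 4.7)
The plan is to handle the three convolutions and the non-positivity claim in sequence, with the weighted convolution $\circledast$ serving as the bridge between the ordinary mollification $\ast$ and the geometry of $\Omega$.

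For the first bullet in (i), I would check the standard properties of $d\ast v_\delta$: smoothness and $L^1$-convergence are classical. For the Morrey seminorm, I would write $\int_{B_\rho(x)}|d\ast v_\delta(y)|\dy \leq \int v_\delta(z)\int_{B_\rho(x-z)}|d(w)|\dy\,\dz$ by Fubini and the translation invariance of Lebesgue measure, and then apply the definition of $\|d\|_{L^{1,\alpha}(\Omega)}$ to the inner integral. For the second bullet, the key observation is that on $\Omega$ the radial map $\gamma_\delta$ satisfies $|\gamma_\delta(x)-\gamma_\delta(y)|\leq (1-4\delta)|x-y| + 3\delta|x/|x|-y/|y||$; combining $|x|,|y|\geq 1/2$ with the elementary estimate $|x/|x|-y/|y||\leq 2|x-y|/\min(|x|,|y|)$ gives a Lipschitz constant $\leq 2$ for $\gamma_\delta$ (any $\delta\in(0,1/8)$). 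Hence $\gamma_\delta(B_\rho(x))\subset B_{2\rho}(\gamma_\delta(x))$, and after the change of variables $y=\gamma_\delta(z)$ (using $J\gamma_\delta$) we get $\int_{B_\rho(x)}|d\circledast v_\delta|\dy \leq \int_{B_{2\rho}(\gamma_\delta(x))}|d\ast v_\delta|\,dw \leq 2^{n-1+\alpha}\|d\ast v_\delta\|_{L^{1,\alpha}(\Omega)}\rho^{n-1+\alpha}$, which combined with the first bullet gives the constant $2^n$. Smoothness of $d\circledast v_\delta$ away from $0$ is immediate since $\gamma_\delta$ is smooth there, and the $L^1$-convergence follows from $\gamma_\delta\to\mathrm{id}$ uniformly on $\overline\Omega$ together with the $L^1$-convergence of $d\ast v_\delta\to d$. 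For the third bullet, the factor $(D_j(e_i\cdot\gamma_\delta^{-1}))\circ\gamma_\delta$ is smooth on $\overline\Omega$ with $C^{0,\alpha}$-norm bounded uniformly in $\delta$ (since $\gamma_\delta$ is a uniformly non-degenerate radial map), and $b^j\circledast v_\delta$ is smooth with bounded $C^{0,\alpha}$ norm (convolution preserves Hölder norms and multiplication by $J\gamma_\delta$ and composition with $\gamma_\delta$ again contribute bounded factors). The convergence $b^i\star v_\delta\to b^i$ in $L^1$ uses that $D_j(e_i\cdot\gamma_\delta^{-1})\to\delta_{ij}$ uniformly and $b^j\circledast v_\delta\to b^j$ in $L^1$.

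For part (ii), which I expect to be the main obstacle, the idea is that the definition of $b^i\star v_\delta$ is precisely what is needed to make the change of variables $y=\gamma_\delta(x)$ cancel all Jacobian factors. Given $\zeta\in C^1_c(\Omega;[0,\infty))$, set $\tilde\zeta(y)=\zeta(\gamma_\delta^{-1}(y))$, which lies in $C^1_c(\Omega;[0,\infty))$ and, crucially, is supported in $\gamma_\delta(\mathrm{supp}\,\zeta)\subset\gamma_\delta(\Omega)=\{y:\tfrac{1}{2}+\delta\leq|y|\leq 1-\delta\}$, i.e.\ at distance at least $\delta$ from $\partial\Omega$. The chain rule gives $D_j\tilde\zeta(y)=\sum_i (D_i\zeta)(\gamma_\delta^{-1}(y))\cdot D_j(e_i\cdot\gamma_\delta^{-1})(y)$, and substituting into the definitions and changing variables $y=\gamma_\delta(x)$ yields the key identity
$$\int_\Omega d\circledast v_\delta\cdot\zeta - \sum_{i=1}^n b^i\star v_\delta\cdot D_i\zeta\,\dx = \int_\Omega (d\ast v_\delta)\tilde\zeta - \sum_{j=1}^n (b^j\ast v_\delta)D_j\tilde\zeta\,\dy.$$
Unwinding the convolutions by Fubini rewrites the right-hand side as $\int v_\delta(z)\bigl[\int d(w)\tilde\zeta(w+z) - \sum_j b^j(w)D_j\tilde\zeta(w+z)\,\dy\bigr]\dz$; since $\mathrm{supp}(\tilde\zeta(\cdot+z))\subset\Omega$ for every $|z|<\delta$, the inner bracket is $\leq 0$ by the hypothesized weak non-positivity applied to the test function $w\mapsto\tilde\zeta(w+z)$, and the outer integral is therefore $\leq 0$ as well.

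The hard point is bookkeeping in the chain-rule identity above and verifying that the star-convolution is defined in exactly the right way to produce it; once that identity is in hand, the translation trick for $\tilde\zeta$ is the standard mollification-preserves-sign argument, and its viability is guaranteed by the geometric fact that $\gamma_\delta$ pushes $\Omega$ inward by $\delta$. Smoothness, norm bounds and $L^1$-convergence in part (i) are then routine once the Lipschitz estimate on $\gamma_\delta$ has been established.
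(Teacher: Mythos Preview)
Your proposal is correct and follows essentially the same route as the paper: Fubini for the Morrey bound on $d\ast v_\delta$, the Lipschitz estimate $|\gamma_\delta(x)-\gamma_\delta(y)|\leq(1+8\delta)|x-y|$ on $\Omega$ for the bound on $d\circledast v_\delta$, product--composition estimates for the $C^{0,\alpha}$ bound on $b^i\star v_\delta$, and for (ii) the change of variables $y=\gamma_\delta(x)$ reducing to $\{b^j\ast v_\delta\},\,d\ast v_\delta$ tested against $\tilde\zeta=\zeta\circ\gamma_\delta^{-1}$, whose support lies $\delta$--deep in $\Omega$ so that the Fubini/translation argument applies. The only organizational difference is that the paper splits (ii) into two steps (first showing $\{b^j\ast v_\delta\},\,d\ast v_\delta$ are weakly non-positive on $B_{1-\delta}(0)\setminus\overline{B_{1/2+\delta}(0)}$, then changing variables), while you do both at once; you also spell out the Lipschitz computation and the translation step that the paper leaves to the reader.
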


\begin{proof}
We leave some details to the reader, which follow from standard real analysis.

First, consider (i). We discuss each item separately. 
\begin{itemize}
 \item Since $v \in C^{\infty}_{c}(B_{1}(0);[0,\infty))$ is a standard mollifier, then $d \ast v_{\delta} \in C^{\infty}(\bbR^{n})$ and $d \ast v_{\delta} \rightarrow d$ in $L^{1}(\Omega)$ as $\delta \searrow 0$ are well-known real analysis facts.

 Next, fix $x \in \bbR^{n}$ and $\rho \in (0,\infty).$ We compute using the definition of the convolution, Fubini's theorem, a change of variables and the extension $d(y)=0$ for $y\in \bbR^{n} \setminus \Omega,$ the definition of the $L^{1,\alpha}$ norm, and $\int v_{\delta}(z) \dz = 1$ since $v \in C^{\infty}_{c}(B_{1}(0);[0,\infty))$ is a standard mollifier
 $$\begin{aligned}
 \frac{1}{\rho^{n-1+\alpha}} \int_{\Omega \cap B_{\rho}(x)} |d \ast v_{\delta}(y)| \dy \leq & \frac{1}{\rho^{n-1+\alpha}} \int_{\Omega \cap B_{\rho}(x)} \int |d(y-z)| v_{\delta}(z) \dz \dy \\
 \leq & \int \frac{1}{\rho^{n-1+\alpha}} \int_{\Omega \cap B_{\rho}(x)} |d(y-z)| \dy v_{\delta}(z) \dz \\
 \leq & \int \frac{1}{\rho^{n-1+\alpha}} \int_{\Omega \cap B_{\rho}(x-z)} |d(y)| \dy v_{\delta}(z) \dz \\
 \leq & \int \| d \|_{L^{1,\alpha}(\Omega)} v_{\delta}(z) \dz = \| d \|_{L^{1,\alpha}(\Omega)}.
 \end{aligned}$$
 This verifies $\| d \ast v_{\delta} \|_{L^{1,\alpha}(\Omega)} \leq \| d \|_{L^{1,\alpha}(\Omega)}.$
 \item Observe that $\gamma_{\delta} \in C^{\infty}(\bbR^{n} \setminus \{ 0\})$ and $\gamma_{\delta}$ converges smoothly over $\overline{\Omega}$ to the identity as $\delta \searrow 0.$ Using these facts, Definition \ref{mollification}(i), and the previous item we can show $d \circledast v_{\delta} \in C^{\infty}(\bbR^{n} \setminus \{0\})$ and $d \circledast v_{\delta} \rightarrow d \text{ in } L^{1}(\Omega)$ as $\delta \searrow 0.$

 Fix any $x \in \bbR^{n}$ and $\rho \in (0,\infty).$ By Definition \ref{mollification} we can check
 $$\gamma_{\delta}(\Omega) = B_{1-\delta}(0) \setminus \overline{B_{\frac{1}{2}}(0)} \text{ and } \gamma_{\delta}(B_{\rho}(x)) \subseteq B_{(1+8\delta)\rho}(\gamma_{\delta}(z)).$$ 
 Using this together with Definition \ref{mollification}(i), $\gamma_{\delta}$ as a change of variables, $\delta \in (0,\frac{1}{8}),$ $\alpha \in (0,1),$ and the previous item we compute
  $$\begin{aligned}
 \frac{1}{\rho^{n-1+\alpha}} \int_{\Omega \cap B_{\rho}(x)} |d \circledast v_{\delta}(y)| \dy = & \frac{1}{\rho^{n-1+\alpha}} \int_{\gamma_{\delta}(\Omega \cap B_{\rho}(x))} |d \ast v_{\delta}(y)| \dy \\
 \leq &  \frac{1}{\rho^{n-1+\alpha}} \int_{\Omega \cap B_{2\rho}(\gamma_{\delta}(x))} |d \ast v_{\delta}(y)| \dy \\
 \leq & 2^{n-1+\alpha} \| d \|_{L^{1,\alpha}(\Omega)} \leq 2^{n} \| d \|_{L^{1,\alpha}(\Omega)}.
 \end{aligned}$$
 We conclude $ \| d \circledast v_{\delta} \|_{L^{1,\alpha}(\Omega)} \leq 2^{n} \| d \|_{L^{1,\alpha}(\Omega)}.$
 \item Observe that $\gamma_{\delta}: \overline{\Omega} \rightarrow (\overline{B_{1-\delta}(0)} \setminus B_{\frac{1}{2}+\delta}(0))$ is invertible with 
 $$\gamma_{\delta}^{-1} \in C^{\infty}(\overline{B_{1-\delta}(0)} \setminus B_{\frac{1}{2}+\delta}(0);\overline{\Omega});$$
 since $\gamma_{\delta}$ converges smoothly over $\overline{\Omega}$ to the identity as $\delta \searrow 0,$ we conclude
 $$\Big( (D_{j}( e_{i} \cdot \gamma^{-1}_{\delta})) \circ \gamma_{\delta} \Big) \rightarrow \left\{ \begin{array}{ll} 1 & \text{if } j=i \\ 0 & \text{if } j \neq i \end{array} \right.$$
 uniformly over $\overline{\Omega}$ as $\delta \searrow 0,$ for each $i,j \in \{ 1,\ldots,n \}.$ These facts together with Definition \ref{mollification}(ii) and the previous item applied to $b^{j} \circledast v_{\delta}$ for each $j \in \{ 1,\ldots,n \},$ we can show $b^{i} \star v_{\delta} \in C^{\infty}(\overline{\Omega})$ and $b^{i} \star v_{\delta} \rightarrow b^{i}$ in $L^{1}(\Omega)$ as $\delta \searrow 0$ for each $i \in \{ 1,\ldots,n \}.$

Next, using the definition of the $C^{0,\alpha}$ norm and Definition \ref{mollification}(ii) we compute for each $i \in \{ 1,\ldots,n \}$
$$\begin{aligned}
\| & b^{i} \star v_{\delta} \|_{C^{0,\alpha}(\overline{\Omega})} \\
& \leq \sum_{j=1}^{n} \| (D_{j}(e_{i} \cdot \gamma_{\delta}^{-1})) \circ \gamma_{\delta} \|_{C^{0,\alpha}(\overline{\Omega})} \| J\gamma_{\delta} \|_{C^{0,\alpha}(\overline{\Omega})} \| (b^{j} \ast v_{\delta}) \circ \gamma_{\delta} \|_{C^{0,\alpha}(\overline{\Omega})}.
\end{aligned}$$
We also compute again using the definition of the $C^{0,\alpha}$ norm
$$\begin{aligned}
\| & (b^{j} \ast v_{\delta}) \circ \gamma_{\delta} \|_{C^{0,\alpha}(\overline{\Omega})} \\
\leq & \max \left\{ 1,\sup_{x,y \in \overline{\Omega},x \neq y} \frac{|\gamma_{\delta}(x)-\gamma_{\delta}(y)|^{\alpha}}{|x-y|^{\alpha}} \right\} \| b^{j} \ast v_{\delta} \|_{C^{0,\alpha}(\overline{B_{1-\delta}(0)} \setminus B_{\frac{1}{2}+\delta}(0))}
\end{aligned}$$
for each $j \in \{ 1,\ldots,n \}.$ Since $\gamma_{\delta}(x) = (1-4\delta)x+3\delta \frac{x}{|x|}$ for $x \in \bbR^{n} \setminus \{ 0 \},$ we can find $C_{\ref{mollificationproperties}}=C_{\ref{mollificationproperties}}(n) \in (0,\infty)$ so that for each $\delta \in (0,\frac{1}{8})$ we have
$$\left. \begin{aligned}
& \| (D_{j}(e_{i} \cdot \gamma_{\delta}^{-1})) \circ \gamma_{\delta} \|_{C^{0,\alpha}(\overline{\Omega})} \| J\gamma_{\delta} \|_{C^{0,\alpha}(\overline{\Omega})} \\
& \times \max \left\{ 1,\sup_{x,y \in \overline{\Omega},x \neq y} \frac{|\gamma_{\delta}(x)-\gamma_{\delta}(y)|^{\alpha}}{|x-y|^{\alpha}} \right\}
\end{aligned} \right\} \leq C_{\ref{mollificationproperties}}$$
for each $i,j \in \{ 1,\ldots,n \}.$ These three calculations taken together imply
$$\| b^{i} \star v_{\delta} \|_{C^{0,\alpha}(\overline{\Omega})} \leq C_{\ref{mollificationproperties}} \sum_{j=1}^{n}  \| b^{j} \ast v_{\delta} \|_{C^{0,\alpha}(\overline{B_{1-\delta}(0)} \setminus B_{\frac{1}{2}+\delta}(0))}$$
for each $i \in \{ 1,\ldots,n \}.$ To conclude $\| b^{i} \star v_{\delta} \|_{C^{0,\alpha}(\overline{\Omega})} \leq C_{\ref{mollificationproperties}} \sum_{j=1}^{n} \| b^{j} \|_{C^{0,\alpha}(\overline{\Omega})}$ as needed, it therefore suffices to verify
$$\| b^{j} \ast v_{\delta} \|_{C^{0,\alpha}(\overline{B_{1-\delta}(0)} \setminus B_{\frac{1}{2}+\delta}(0))} \leq \| b^{j} \|_{C^{0,\alpha}(\overline{\Omega})}.$$
This follows from the fact that $v \in C^{\infty}_{c}(B_{1}(0);[0,\infty))$ is a standard mollifier and the definitions of the convolution and the $C^{0,\alpha}$ norm. For example, given $x,y \in \overline{B_{1-\delta}(0)} \setminus B_{\frac{1}{2}+\delta}(0)$ we can compute
$$\begin{aligned}
|b^{j} \ast v_{\delta}(x)-b^{j} \ast v_{\delta}(y)| \leq & \int |b^{j}(x+\delta z)-b^{j}(y+ \delta z)| v(z) \dz \\
\leq & |x-y|^{\alpha} \| b^{j} \|_{C^{1,\alpha}(\overline{\Omega})}.
\end{aligned}$$
We leave the details to the reader, and conclude the required estimate $\| b^{i} \star v_{\delta} \|_{C^{0,\alpha}(\overline{\Omega})} \leq C_{\ref{mollificationproperties}} \sum_{j=1}^{n} \| b^{j} \|_{C^{0,\alpha}(\overline{\Omega})}$ for each $i \in \{ 1,\ldots, n \}.$
\end{itemize}

Next, consider (ii). Proving $\{ b^{i} \star v_{\delta} \}_{i=1}^{n},d \circledast v_{\delta}$ are weakly non-positive over $\Omega$ is done in two steps. First, we check using the definition of the convolution that $\{ b^{i} \ast v_{\delta} \}_{i=1}^{n},d \ast v_{\delta}$ are weakly non-positive over $B_{1-\delta}(0) \setminus \overline{B_{\frac{1}{2}+\delta}(0)};$ we leave this to the reader. Second, using $\gamma_{\delta}: \Omega \rightarrow B_{1-\delta}(0) \setminus \overline{B_{\frac{1}{2}+\delta}(0)}$ as a change of variables we can check for $\zeta \in C^{1}_{c}(\Omega;[0,\infty))$
$$\begin{aligned}
\int & (d \circledast v_{\delta}) \zeta - \sum_{i=1}^{n} (b^{i} \star v_{\delta}) D_{i} \zeta \dx \\
& = \int (d \ast v_{\delta}) (\zeta \circ \gamma_{\delta}^{-1}) - \sum_{i,j=1}^{n} (b^{j} \ast v_{\delta})(D_{j}(e_{i} \cdot \gamma_{\delta}^{-1}))((D_{i} \zeta) \circ \gamma_{\delta}^{-1}) \dx \\
& = \int (d \ast v_{\delta}) (\zeta \circ \gamma_{\delta}^{-1}) - \sum_{j=1}^{n} (b^{j} \ast v_{\delta}) D_{j}(\zeta \circ \gamma_{\delta}^{-1}) \dx \leq 0,
\end{aligned}$$ 
since $\{ b^{j} \ast v_{\delta} \}_{j=1}^{n},d \ast v_{\delta}$ are weakly non-positive over $B_{1-\delta}(0) \setminus \overline{B_{\frac{1}{2}+\delta}(0)};$
\end{proof}

\subsection{Acknowledgements}

This work was partly conducted by the author at the Korea Institute for Advanced Study, as an Associate Member. 

\section{Estimate and Existence Lemmas} \label{estimateandexistencelemmas}

In this section we prove the necessary a priori gradient estimate, existence, and weak maximum principle results needed to prove Theorem \ref{hopflemma}. 

\begin{lem}[Morrey estimate] \label{Morreyestimate}
Suppose $\lambda,J \in (0,\infty),$ $\alpha \in (0,1),$ and let $\Omega = B_{1}(0) \setminus \overline{B_{\frac{1}{2}}(0)}.$ There is $C_{\ref{Morreyestimate}}=C_{\ref{Morreyestimate}}(n,\lambda,J,\alpha) \in (0,\infty)$ so that if 
\begin{enumerate}
 \item[(i)] $a^{ij},b^{i} \in C^{0,\alpha}(\overline{\Omega})$ and $c^{i},d \in L^{1,\alpha}(\Omega)$ for $i,j \in \{ 1,\ldots,n \},$ 
 \item[(i)] $\{a^{ij}\}_{i,j=1}^{n}$ are uniformly elliptic over $\Omega$ with respect to $\lambda,$
 \item[(ii)] $\sum_{i,j}^{n} \| a^{ij} \|_{C^{0,\alpha}(\overline{\Omega})} + \sum_{i=1}^{n} \left( \| b^{i} \|_{C^{0,\alpha}(\overline{\Omega})} + \| c^{i} \|_{L^{1,\alpha}(\Omega)} \right) + \| d \|_{L^{1,\alpha}(\Omega)}  \leq J,$
\end{enumerate}
and if $\cu \in C^{1,\alpha}(\overline{\Omega})$ is a weak solution over $\Omega$ of the equation
$$\sum_{i,j=1}^{n} D_{i} \left( a^{ij} D_{j}\cu + b^{i}\cu \right) + \sum_{i=1}^{n} c^{i} D_{i}\cu + d\cu = g+\sum_{i=1}^{n} D_{i}f^{i}$$
with $g \in L^{1,\alpha}(\Omega)$ and $f \in C^{0,\alpha}(\overline{\Omega}),$ then
$$\| \cu \|_{C^{1,\alpha}(\overline{\Omega})} \leq C_{\ref{Morreyestimate}} \left( \| \cu \|_{L^{1}(\Omega)} + \| g \|_{L^{1,\alpha}(\Omega)} + \sum_{i=1}^{n} \| f^{i} \|_{ C^{0,\alpha}(\overline{\Omega})} \right).$$
\end{lem}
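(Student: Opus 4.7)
My plan is to reduce Lemma \ref{Morreyestimate} directly to Theorem 5.5.5'(b) of \cite{M66}; as indicated in Section \ref{introduction}, the lemma is a restatement of that theorem in the present notation, so the task is largely bookkeeping rather than new analysis.

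First I would verify the hypotheses against those of Morrey's theorem. The domain $\Omega = B_{1}(0) \setminus \overline{B_{\frac{1}{2}}(0)}$ has smooth boundary, so any geometric regularity conditions in \cite{M66} hold trivially. The uniform ellipticity assumption together with the $C^{0,\alpha}(\overline{\Omega})$ bound on $\{a^{ij}\}_{i,j=1}^{n}$ supplies the structure on the principal part that Morrey requires, while the $C^{0,\alpha}(\overline{\Omega})$ control on $\{b^{i}\}_{i=1}^{n}$ and the Morrey-space control on $c^{i}$ and $d$ correspond to the hypotheses Morrey imposes on the remaining top-order and lower-order coefficients. All of these are dominated by the single quantity $J$, which is why the final constant $C_{\ref{Morreyestimate}}$ depends only on $n,\lambda,J,\alpha$.

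Next I would match the inhomogeneous data. The source term $g \in L^{1,\alpha}(\Omega)$ is admissible as a pure right-hand side in Morrey's formulation, while the divergence-form contribution $\sum_{i=1}^{n} D_{i} f^{i}$ with $f^{i} \in C^{0,\alpha}(\overline{\Omega})$ enters exactly through the coefficients $f^{i}$. The resulting $C^{1,\alpha}(\overline{\Omega})$ bound involves $\|g\|_{L^{1,\alpha}(\Omega)}$ and $\sum_{i=1}^{n} \|f^{i}\|_{C^{0,\alpha}(\overline{\Omega})}$ linearly, together with the customary $\|\cu\|_{L^{1}(\Omega)}$ term needed in any such a priori estimate (the principal-part equation determines $\cu$ only up to lower-order contributions).

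The main obstacle I expect is purely notational: Morrey \cite{M66} uses slightly different conventions for the Morrey-space norm and for the weak divergence-form equation than Definitions \ref{Morreyspace} and \ref{weaksolution}. The bulk of the work is therefore to confirm that our Morrey norm is controlled by the one used in \cite{M66} up to a constant depending only on $n$ and $\alpha$, and that a weak solution in the sense of Definition \ref{weaksolution} solves Morrey's weak equation. Once these identifications are made, the cited theorem delivers the stated estimate directly, with no additional PDE argument required.
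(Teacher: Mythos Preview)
Your proposal is correct and matches the paper's own proof: the lemma is simply Theorem 5.5.5'(b) of \cite{M66} specialized to the present notation, with the hypotheses (i)--(ii) supplying Morrey's $C^{1}_{\mu}$-conditions and Theorem 5.5.2(b) of \cite{M66} confirming the dependence of the constant on $n,\lambda,J,\alpha$. The only minor addition in the paper is the explicit dictionary $\mu \leftrightarrow \alpha$, $G \leftrightarrow \Omega$, $e \leftrightarrow \{f^{i}\}_{i=1}^{n}$, $f \leftrightarrow g$ between Morrey's symbols and ours.
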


\begin{proof}
This is Theorem 5.5.5'(b) of \cite{M66}
$$\text{(with $\mu,G,e,f$ replaced respectively by $\alpha,\Omega,\{ f^{i} \}_{i=1}^{n},g$)}.$$
The $C^{1,\alpha}$-conditions (that is, the ``$C^{1}_{\mu}$-conditions'' as stated in Definition 5.5.2 of \cite{M66}) are implied by (i). To more clearly see the dependence $C_{\ref{Morreyestimate}}=C_{\ref{Morreyestimate}}(n,\lambda,J,\alpha),$ see Theorem 5.5.2(b) of \cite{M66}.
\end{proof}

Next, we state for convenience the more general version of Theorem 8.16 of \cite{GT83}, using the remark on page 193 of \cite{GT83}.

\begin{lem}[Weak maximum principle] \label{weakmaximumprinciple}
Suppose $q>n$ and $\lambda,k \in (0,\infty).$ Denote $\Omega = B_{1}(0) \setminus \overline{B_{\frac{1}{2}}(0)}.$ There is $C_{\ref{weakmaximumprinciple}}=C_{\ref{weakmaximumprinciple}}(n,q,\lambda,k) \in (0,\infty)$ so that if
\begin{enumerate}
 \item[(i)] $a^{ij} \in L^{\infty}(\Omega),$ $b^{i},c^{i} \in L^{q}(\Omega)$ for $i,j \in \{ 1,\ldots,n \}$ and $d \in L^{\frac{q}{2}}(\Omega),$
 \item[(ii)] $\{a^{ij}\}_{i,j=1}^{n}$ are uniformly elliptic over $\Omega$ with respect to $\lambda,$
 \item[(iii)] $\{ b^{i} \}_{i=1}^{n},d$ are weakly non-positive over $\Omega,$
 \item[(iv)] $\sum_{i=1}^{n} \left( \| b^{i} \|_{L^{q}(\Omega)} + \| c^{i} \|_{L^{q}(\Omega)} \right) + \| d \|_{L^{\frac{q}{2}}(\Omega)}  \leq k,$
\end{enumerate}
and if $\cu \in C(\overline{\Omega}) \cap W^{1,2}(\Omega)$ is a solution over $\Omega$ of the equation 
$$\sum_{i,j=1}^{n} D_{i} \left( a^{ij} D_{j}\cu + b^{i}\cu \right) + \sum_{i=1}^{n} c^{i} D_{i}\cu + d\cu \leq g+\sum_{i=1}^{n} D_{i}f^{i}$$
with $g \in L^{\frac{q}{2}}(\Omega)$ and $f^{i} \in L^{q}(\Omega)$ for each $i \in \{ 1,\ldots,n \},$ then
$$\inf_{\Omega} \cu \geq \inf_{\partial \Omega} \min \{ 0,\cu \} - C_{\ref{weakmaximumprinciple}} \left( \| g \|_{L^{\frac{q}{2}}(\Omega)} + \sum_{i=1}^{n} \| f^{i} \|_{L^{q}(\Omega)} \right).$$
\end{lem}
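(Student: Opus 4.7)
The plan is to deduce this lemma as essentially a direct citation of Theorem 8.16 of \cite{GT83}, enhanced by the remark on page 193 of \cite{GT83}, which allows the right-hand side to include divergence-form inhomogeneous data $\sum_{i=1}^n D_i f^i$ with $f^i \in L^q(\Omega)$ in addition to $g \in L^{q/2}(\Omega)$.

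First I would match the hypotheses with those of Theorem 8.16 of \cite{GT83}. The structural condition on the lower-order coefficients used throughout Chapter 8 of \cite{GT83}, namely $\int (d\zeta - \sum_{i=1}^n b^i D_i \zeta) \, dx \leq 0$ for non-negative $\zeta$, is exactly the weak non-positivity condition (iii) by Definition \ref{weaklynonpositive}. Hypotheses (i), (ii), and (iv) reproduce the ellipticity and integrability assumptions of Theorem 8.16 verbatim, with the single constant $k$ absorbing the separate $L^q$ and $L^{q/2}$ bounds on the lower-order coefficients.

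Second I would identify the conclusion of our lemma with the one given in Theorem 8.16. Under the sign convention of Definition \ref{weaksolution} (which is consistent with that of \cite{GT83}), the weak inequality $\sum_{i,j} D_i(a^{ij}D_j \cu + b^i \cu) + \sum_i c^i D_i \cu + d \cu \leq g + \sum_i D_i f^i$ is precisely the form of weak supersolution inequality for which Theorem 8.16 furnishes a lower bound on $\inf_\Omega \cu$ in terms of $\inf_{\partial\Omega}\min\{0,\cu\}$, the $L^{q/2}$ norm of $g$, and—by the remark on page 193—the $L^q$ norms of the $f^i$. The dependence $C_{\ref{weakmaximumprinciple}} = C_{\ref{weakmaximumprinciple}}(n,q,\lambda,k)$ is then read directly from the statement of Theorem 8.16 of \cite{GT83}.

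The only real obstacle, which is essentially bookkeeping rather than a genuine difficulty, is confirming that the sign conventions between Definition \ref{weaksolution} and the weak formulation used immediately before Theorem 8.16 of \cite{GT83} agree, so that our hypothesis with $\leq$ corresponds to a lower bound on $\inf_\Omega \cu$ and not an upper bound on $\sup_\Omega \cu$. I would verify this by writing out the integration-by-parts identity implicit in Definition \ref{weaksolution} and comparing term by term with the weak formulation used in \cite{GT83}; once this is checked, the cited theorem applies directly and the proof reduces to a one-line reference.
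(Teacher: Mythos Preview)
Your proposal is correct and matches the paper's treatment exactly: the paper does not prove this lemma at all but simply states it ``for convenience'' as the more general version of Theorem 8.16 of \cite{GT83}, using the remark on page 193 of \cite{GT83}. Your additional bookkeeping about matching hypotheses and sign conventions is more than the paper provides, but it is exactly the verification one would carry out to confirm the citation.
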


We use Lemmas \ref{Morreyestimate},\ref{weakmaximumprinciple} to show we can solve linear divergence form equations with lower-order terms in a Morrey space. This will allow us to get the barrier functions in step 2 of the proof of Theorem \ref{hopflemma}.

\begin{lem} \label{Morreyexistence}
Suppose $q>n$ and $\lambda \in (0,\infty).$ Denote $\alpha = 1-\frac{n}{q}$ and $\Omega = B_{1}(0) \setminus \overline{B_{\frac{1}{2}}(0)}.$ Also suppose we have functions
\begin{enumerate}
 \item[(i)] $ a^{ij},b^{i} \in C^{0,\alpha}(\overline{\Omega}),$ $c^{i} \in L^{q}(\Omega)$ for $i,j \in \{ 1,\ldots,n \}$ and $d \in L^{\frac{q}{2}}(\Omega) \cap L^{1,\alpha}(\Omega),$
 \item[(ii)] $\{a^{ij}\}_{i,j=1}^{n}$ are uniformly elliptic over $\Omega$ with respect to $\lambda,$ 
 \item[(iii)] $\{ b^{i} \}_{i=1}^{n},d$ are weakly non-positive over $\Omega.$
\end{enumerate}
Then there is $\varphi \in C^{1,\alpha}(\overline{\Omega})$ which is a weak solution over $\Omega$ of the equation
$$\sum_{i,j=1}^{n} D_{i} \left( a^{ij} D_{j}\varphi + b^{i}\varphi \right) + \sum_{i=1}^{n} c^{i}D_{i}\varphi + d\varphi = 0$$
with $\varphi(x) = \left\{ \begin{array}{cc} 0 & \text{for } x \in \partial B_{1}(0) \\ -1 & \text{for } x \in \partial B_{\frac{1}{2}}(0) \end{array} \right.$ and $\varphi(x) \in [-1,0] \text{ for each } x \in \overline{\Omega}.$
\end{lem}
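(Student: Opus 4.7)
The plan is to construct $\varphi$ by approximation. We regularize the coefficients so that a classical divergence-form existence result (Theorem 8.34 of \cite{GT83}) applies, producing smooth approximate solutions $\varphi_\delta$, then extract a $C^{1,\alpha}$ limit using the a priori estimate of Lemma \ref{Morreyestimate} together with uniform sign bounds from Lemma \ref{weakmaximumprinciple}.

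For $\delta \in (0, \tfrac{1}{8})$ I would replace $b^i$ by $b^i \star v_\delta$ and $d$ by $d \circledast v_\delta$. Lemma \ref{mollificationproperties}(i) furnishes smoothness on $\overline{\Omega}$ together with uniform $C^{0,\alpha}(\overline{\Omega})$ and $L^{1,\alpha}(\Omega)$ bounds, while Lemma \ref{mollificationproperties}(ii) preserves weak non-positivity of $\{ b^i \star v_\delta \}_{i=1}^n, d \circledast v_\delta$. I would regularize $c^i$ by ordinary convolution $c^i \ast v_\delta$, obtaining a uniform $L^q(\Omega)$ bound by Young's inequality; a uniform $L^{q/2}(\Omega)$ bound on $d \circledast v_\delta$ follows from a change-of-variables argument with $\gamma_\delta$, paralleling the $L^{1,\alpha}$ computation in Lemma \ref{mollificationproperties}(i) and using that $J\gamma_\delta$ is uniformly bounded. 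The coefficients $a^{ij}$ are already $C^{0,\alpha}(\overline{\Omega})$ and need no regularization beyond (optionally) an ordinary mollification to match the hypotheses of the existence theorem.

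Next, fix a $C^{1,\alpha}(\overline{\Omega})$ extension $\psi$ of the target boundary data, for example $\psi(x) = 2|x|-2$, so that $\psi = 0$ on $\partial B_1(0)$ and $\psi = -1$ on $\partial B_{\frac{1}{2}}(0)$; rewriting the problem for $\varphi_\delta - \psi$ with zero trace produces a $C^{0,\alpha}$ source term obtained by applying the regularized operator to $\psi$, and Theorem 8.34 of \cite{GT83} then yields a weak solution $\varphi_\delta$ realizing the prescribed boundary values. Two applications of Lemma \ref{weakmaximumprinciple} with vanishing source (to $\varphi_\delta$ for the lower bound, and to $-\varphi_\delta$ for the upper bound, noting that the same hypotheses on the coefficients apply to $-\varphi_\delta$) give $-1 \le \varphi_\delta \le 0$ on $\overline{\Omega}$, which in particular provides a uniform $L^1(\Omega)$ bound. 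Lemma \ref{Morreyestimate} applied with $g = 0$ and $f^i = 0$ then delivers a uniform $C^{1,\alpha}(\overline{\Omega})$ bound on $\varphi_\delta$, independent of $\delta$.

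By Arzel\`a--Ascoli, a subsequence $\varphi_{\delta_k}$ converges in $C^1(\overline{\Omega})$ to some $\varphi \in C^{1,\alpha}(\overline{\Omega})$ that inherits $-1 \le \varphi \le 0$ and the correct boundary values. To verify the limit is a weak solution of the original equation I pass to the limit in the weak formulation, invoking $b^i \star v_\delta \to b^i$, $d \circledast v_\delta \to d$, and $c^i \ast v_\delta \to c^i$ in $L^1(\Omega)$ (Lemma \ref{mollificationproperties}(i) and standard mollifier facts) together with the uniform convergence of $\varphi_{\delta_k}$ and $D \varphi_{\delta_k}$. I expect the principal obstacle to be the bookkeeping for the existence step: matching the regularized coefficients to the hypotheses of Theorem 8.34 of \cite{GT83}, arranging the subtraction of $\psi$ so that the induced source term is admissible, and verifying the uniform $L^{q/2}$ bound on $d \circledast v_\delta$ needed by Lemma \ref{weakmaximumprinciple} --- all of which reduce to routine change-of-variables arguments of the same flavor as those underlying Lemma \ref{mollificationproperties}.
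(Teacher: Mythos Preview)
Your proposal is correct and follows essentially the same approach as the paper: mollify $b^{i},c^{i},d$ via $b^{i}\star v_{\delta}$, $c^{i}\ast v_{\delta}$, $d\circledast v_{\delta}$, invoke Theorem 8.34 of \cite{GT83} for each $\delta$, use Lemma~\ref{weakmaximumprinciple} for the sign bound $-1\le\varphi_{\delta}\le 0$, apply Lemma~\ref{Morreyestimate} for a $\delta$-independent $C^{1,\alpha}$ bound, and pass to the limit. Two small simplifications relative to your outline: Theorem 8.34 of \cite{GT83} handles the inhomogeneous boundary data directly, so the explicit subtraction of $\psi$ is unnecessary, and since the source terms vanish the conclusion of Lemma~\ref{weakmaximumprinciple} does not involve the constant $C_{\ref{weakmaximumprinciple}}$, so a \emph{uniform} $L^{q/2}$ bound on $d\circledast v_{\delta}$ is not actually needed (mere membership in $L^{q/2}$, guaranteed by smoothness, suffices).
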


\begin{proof}
We follow the proof of and use directly Theorem 8.34 on page 211 of \cite{GT83}.

Define for $\delta \in (0,\frac{1}{8})$ and each $i \in \{ 1,\ldots,n \}$
\begin{equation} \label{memollification}
\begin{array}{ccc}
b^{i}_{\delta} = b^{i} \star v_{\delta}, &
c^{i}_{\delta} = c^{i} \ast v_{\delta}, &
d_{\delta} = d \circledast v_{\delta}
\end{array}
\end{equation}
by Definition \ref{mollification}. Now consider the weakly defined operator over $\Omega$
$$L_{\delta}u = \sum_{i,j=1}^{n} D_{i} \left( a^{ij}D_{j}u + b^{i}_{\delta} u \right) + \sum_{i=1}^{n} c^{i}_{\delta} D_{i}u + d_{\delta}u.$$
Then (i),(ii),(iii), \eqref{memollification}, and Lemma \ref{mollificationproperties} imply $L_{\delta}$ satisfies (8.5),(8.8),(8.85) of \cite{GT83} over $\Omega$ with
$$K = \sum_{i,j=1}^{n} \| a^{ij} \|_{C^{0,\alpha}(\overline{\Omega})} + \sum_{i=1}^{n} \left( \| b^{i}_{\delta} \|_{C^{0,\alpha}(\overline{\Omega})} + \| c^{i}_{\delta} \|_{L^{\infty}(\Omega)} \right) + \| d_{\delta} \|_{L^{\infty}(\Omega)}.$$
We can thus apply Theorem 8.34 of \cite{GT83} 
$$\text{(with $b^{i},c^{i},d,g,f^{i}$ replaced respectively by $b^{i}_{\delta},c^{i}_{\delta},d_{\delta},0,0$)}$$
over $\Omega = B_{1}(0) \setminus \overline{B_{\frac{1}{2}}(0)}$ with operator $L_{\delta}$ to conclude the generalized Dirichlet problem
$$\begin{array}{lr}
L_{\delta}u = 0 \text{ in } \Omega, & u = \left\{ \begin{array}{cc} 0 & \text{over } \partial B_{1}(0) \\ -1 & \text{over } \partial B_{\frac{1}{2}}(0) \end{array} \right.
\end{array}$$
is uniquely solvable in $C^{1,\alpha}(\Omega).$ Letting $\varphi_{\delta} \in C^{1,\alpha}(\overline{\Omega})$ be this unique solution, and comparing (8.2) of \cite{GT83} with Definition \ref{weaksolution}, we conclude $\varphi_{\delta}$ is a weak solution over $\Omega$ of the equation
\begin{equation} \label{varphidelta}
\begin{aligned}
& \sum_{i,j=1}^{n} D_{i} \left( a^{ij}D_{j} \varphi_{\delta} + b^{i}_{\delta} \varphi_{\delta} \right) + \sum_{i=1}^{n} c^{i}_{\delta} D_{i} \varphi_{\delta} + d_{\delta} \varphi_{\delta} = 0 \\
& \text{with } \varphi_{\delta}(x) = \left\{ \begin{array}{cc} 0 & \text{for } x \in \partial B_{1}(0) \\ -1 & \text{for } x \in \partial B_{\frac{1}{2}}(0) \end{array}. \right.
\end{aligned}
\end{equation}
We also apply Lemma \ref{weakmaximumprinciple} (with $f^{i},g=0$ for each $i \in \{ 1,\ldots,n \}$) to get
\begin{equation} \label{varphideltabound}
\varphi_{\delta}(x) \in [-1,0] \text{ for each } x \in \overline{\Omega}.
\end{equation}

Next, we aim to apply Lemma \ref{Morreyestimate} to $\varphi_{\delta}$. By (i), \eqref{memollification}, Remark \ref{Morreyspaceremark}, and Lemma \ref{mollificationproperties} we can conclude
$$\sum_{i,j=1}^{n} \| a^{ij} \|_{C^{0,\alpha}(\overline{\Omega})} + \sum_{i=1}^{n} \left( \| b^{i}_{\delta} \|_{C^{0,\alpha}(\overline{\Omega})} + \| c^{i}_{\delta} \|_{L^{1,\alpha}(\Omega)} \right) + \| d_{\delta} \|_{L^{1,\alpha}(\Omega)} \leq J$$
where, with $C_{\ref{mollificationproperties}}=C_{\ref{mollificationproperties}}(n)$ by Lemma \ref{mollificationproperties}, we let
$$J = \sum_{i,j}^{n} \| a^{ij} \|_{C^{0,\alpha}(\overline{\Omega})} + \sum_{i=1}^{n} \left( C_{\ref{mollificationproperties}} \| b^{i} \|_{C^{0,\alpha}(\overline{\Omega})} + \| c^{i} \|_{L^{1,\alpha}(\Omega)} \right) + 2^{n} \| d \|_{L^{1,\alpha}(\Omega)}.$$
Thus, by \eqref{varphidelta}, Lemma \ref{Morreyestimate}, and \eqref{varphideltabound} we conclude
$$\| \varphi_{\delta} \|_{C^{1,\alpha}(\overline{\Omega})} \leq C_{\ref{Morreyestimate}} \| \varphi_{\delta} \|_{L^{1}(\Omega)} \leq C_{\ref{Morreyestimate}} \omega_{n}$$
where $C_{\ref{Morreyestimate}}=C_{\ref{Morreyestimate}}(n,\lambda,J,\alpha) \in (0,\infty)$ does not depend on $\delta.$

We conclude there is $\varphi \in C^{1,\alpha}(\overline{\Omega})$ so that $\varphi_{\delta} \rightarrow \varphi$ in the $C^{1}(\overline{\Omega})$-norm as $\delta \searrow 0.$ Lemma \ref{mollificationproperties}(i) and \eqref{varphidelta},\eqref{varphideltabound} imply $\varphi$ is the desired solution.
\end{proof}

\section{The Hopf boundary point lemma} \label{thehopfboundarypointlemma}

We are now ready to state and prove our main result.

\begin{thrm}[Hopf boundary point lemma] \label{hopflemma} 
Suppose $q>n$ and $\lambda \in (0,\infty).$ With $\alpha = 1-\frac{n}{q},$ suppose
\begin{enumerate}
 \item[(i)] $a^{ij},b^{i} \in C^{0,\alpha}(\overline{B_{1}(0)}),$ $c^{i} \in L^{q}(B_{1}(0))$ for $i,j \in \{ 1,\ldots,n \}$ and \\ $d \in L^{\frac{q}{2}}(B_{1}(0)) \cap L^{1,\alpha}(B_{1}(0)),$
 \item[(ii)] $\{ a^{ij} \}_{i,j=1}^{n}$ are uniformly elliptic over $B_{1}(0)$ with respect to $\lambda,$
 \item[(iii)] $\{ b^{i} \}_{i=1}^{n},d$ are weakly non-positive over $B_{1}(0),$
 \item[(iv)] $a^{ij}(-e_{n})=a^{ji}(-e_{n})$ for each $i,j \in \{ 1,\ldots,n \}.$
\end{enumerate}
If $\cu \in C(\overline{B_{1}(0)}) \cap W^{1,2}(B_{1}(0))$ is a weak solution over $B_{1}(0)$ to the equation
$$\sum_{i,j=1}^{n} D_{i} \left( a^{ij} D_{j}\cu+b^{i}\cu \right) + \sum_{i=1}^{n} c^{i} D_{i}\cu + d\cu \leq 0$$
and $\cu(x)>\cu(-e_{n})=0$ for all $x \in B_{1}(0),$ then $\liminf_{h \searrow 0} \frac{\cu((h-1)e_{n})}{h} > 0.$
\end{thrm}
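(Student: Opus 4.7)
My plan is a classical Hopf barrier argument, adapted to the divergence-form setting. First, place the interior tangent ball $B_{1/2}(-e_n/2) \subset B_1(0)$, which touches $\partial B_1(0)$ only at $-e_n$; the affine map $T(x) = 2x + e_n$ carries the annulus $B_{1/2}(-e_n/2) \setminus \overline{B_{1/4}(-e_n/2)}$ onto the reference annulus $\Omega := B_1(0) \setminus \overline{B_{1/2}(0)}$ of Lemmas \ref{Morreyexistence} and \ref{weakmaximumprinciple}, and fixes $-e_n$. Pushing $\cu$ and the coefficients forward via $T$, the H\"older, $L^q$, $L^{q/2}$, and Morrey norms change only by dimensional constants, so hypotheses (i)--(iv) are preserved; I continue to write $\cu$ for the pushed-forward function on $\overline\Omega$. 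Lemma \ref{Morreyexistence} then provides a barrier $\varphi \in C^{1,\alpha}(\overline\Omega)$, a weak solution of the transplanted equation with $\varphi = 0$ on $\partial B_1(0)$, $\varphi = -1$ on $\partial B_{1/2}(0)$, and $\varphi \in [-1,0]$. Because $\cu > 0$ on $B_1(0) \setminus \{-e_n\}$, continuity gives $m := \min_{\partial B_{1/2}(0)} \cu > 0$; the combination $\cu + m\varphi$ is a weak supersolution and non-negative on $\partial\Omega$, so Lemma \ref{weakmaximumprinciple} yields $\cu \geq -m\varphi$ on $\Omega$.

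The next step is to show $\liminf_{h \searrow 0} -\varphi((h-1)e_n)/h > 0$. Since $\varphi \in C^{1,\alpha}(\overline\Omega)$ with $\varphi(-e_n) = 0$, this liminf equals $-D_n\varphi(-e_n)$, and the task reduces to excluding $D_n\varphi(-e_n) = 0$. My plan is to compare $-\varphi$ from below with an explicit sub-barrier on a thin sub-annulus adjacent to $-e_n$. Setting $w(x) := e^{-\alpha |x|^2} - e^{-\alpha}$ with $\alpha > 0$ large, the frozen-coefficient action
\[
\sum_{i,j=1}^n a^{ij}(-e_n) D_i D_j w \;=\; \Bigl(4\alpha^2 \sum_{i,j} a^{ij}(-e_n) x_i x_j - 2\alpha \sum_i a^{ii}(-e_n)\Bigr) e^{-\alpha |x|^2}
\]
is pointwise strictly positive on $\{|x| \geq 1/2\}$ for $\alpha$ large, by uniform ellipticity (ii). The residual contributions from $(a^{ij} - a^{ij}(-e_n)) D_j w$ (H\"older-small by (i)) and from $b^i,c^i,d$ (controlled via weak non-positivity (iii) and the Morrey/$L^q$ bounds) can be absorbed on a sufficiently small sub-annulus $\Omega' \subset \Omega$ near $-e_n$, giving $Lw \geq 0$ weakly on $\Omega'$. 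Choosing $c > 0$ so that $cw \leq -\varphi$ on $\partial\Omega'$---which requires an interior lower bound on $-\varphi$ away from $\partial B_1(0)$, for instance from a weak Harnack inequality applied to the non-negative weak solution $-\varphi$---the function $-\varphi - cw$ is a weak supersolution on $\Omega'$ with non-negative boundary values, and Lemma \ref{weakmaximumprinciple} yields $-\varphi \geq cw$ on $\Omega'$. The explicit computation $D_n w(-e_n) = 2\alpha e^{-\alpha}$ then gives $\liminf_{h \searrow 0} -\varphi((h-1)e_n)/h \geq 2c\alpha e^{-\alpha} > 0$.

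Combining $\cu \geq -m\varphi$ with this positive liminf and undoing $T$ yields the claimed $\liminf_{h \searrow 0} \cu((h-1)e_n)/h > 0$ in the original coordinates. The \emph{main obstacle} is verifying $Lw \geq 0$ weakly for the explicit sub-barrier. Because $a^{ij}$ is only H\"older continuous---not $C^1$---the expression $\sum D_i(a^{ij} D_j w)$ is only a distribution, so the inequality must be established by splitting $\int \sum a^{ij} D_j w \cdot D_i\zeta\,\dx$ into a frozen-coefficient integration-by-parts (producing the dominant positive $-\int \sum a^{ij}(-e_n) D_i D_j w \cdot \zeta\,\dx$) plus H\"older-small residuals, and by absorbing the $b^i,c^i,d$ contributions through the weak non-positivity hypothesis combined with the scale-invariance of the Morrey norm on small annuli. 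Quantifying this balance---$\alpha$ large, then $\Omega'$ small---is the technical heart of the proof.
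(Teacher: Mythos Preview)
Your overall architecture---produce a barrier $\varphi$ from Lemma~\ref{Morreyexistence}, compare $\cu$ with $-m\varphi$ via Lemma~\ref{weakmaximumprinciple}, and reduce everything to $D_n\varphi(-e_n)<0$---parallels the paper's Steps~4--5. The gap is in how you propose to obtain $D_n\varphi(-e_n)<0$. You want the explicit function $w(x)=e^{-\alpha|x|^2}-e^{-\alpha}$ to satisfy $Lw\geq 0$ \emph{weakly} on a small sub-annulus $\Omega'$, arguing that the frozen second-order part is $\geq c_0>0$ and the residuals are ``H\"older-small.'' But after you split off the frozen part and integrate it by parts, the top-order residual
\[
R_1(\zeta)\;:=\;\int_{\Omega'}\sum_{i,j}\bigl(a^{ij}-a^{ij}(-e_n)\bigr)D_jw\,D_i\zeta\,\dx
\]
pairs the small vector field $F^i:=\sum_j(a^{ij}-a^{ij}(-e_n))D_jw$ against $D_i\zeta$, not $\zeta$. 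Since $a^{ij}$ is merely $C^{0,\alpha}$, you cannot integrate $R_1$ by parts onto $\zeta$; as a distribution, $\sum_iD_iF^i$ lies only in $C^{-1+\alpha}$ and is generically unbounded near $-e_n$ (of order $|x+e_n|^{\alpha-1}$), so no pointwise comparison with $c_0$ is possible. Concretely, for a bump $\zeta$ supported in a ball of radius $r\ll\mathrm{diam}(\Omega')$ one has $\int|D\zeta|\sim r^{n-1}$ while $\int\zeta\sim r^n$, hence $|R_1(\zeta)|$ can dominate $c_0\int\zeta$ regardless of how small $\Omega'$ is. Thus $Lw\geq 0$ weakly fails, and Lemma~\ref{weakmaximumprinciple} does not apply to $-\varphi-cw$. (A smaller issue: weak non-positivity~(iii) says $\int d\zeta-\sum_ib^iD_i\zeta\leq 0$; it does not control $\int b^iw\,D_i\zeta-dw\zeta$ with nonconstant $w$, and $c^i$ does not appear in~(iii) at all.)

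The paper never asserts a weak differential inequality for an explicit function, precisely because of this obstruction. Instead it builds \emph{two} barriers on the fixed annulus $\Omega$: a classical $\varphi\in C^{2,\alpha}$ solving the frozen problem $\sum a^{ij}(-e_n)D_{ij}\varphi=0$, for which the \emph{classical} Hopf lemma gives $D_n\varphi(-e_n)<0$; and, after $\epsilon$-scaling about $-e_n$, a weak barrier $\varphi_\epsilon\in C^{1,\alpha}$ from Lemma~\ref{Morreyexistence} for the scaled variable-coefficient operator. The difference $\psi_\epsilon=\varphi_\epsilon-\varphi$ then solves a divergence-form equation with right-hand side $g_\epsilon+\sum_iD_if^i_\epsilon$, where the troublesome top-order residual sits harmlessly inside $f^i_\epsilon$ with $\|f^i_\epsilon\|_{C^{0,\alpha}}=O(\epsilon^\alpha)$. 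The decisive step---absent from your outline---is Lemma~\ref{Morreyestimate}, the boundary $C^{1,\alpha}$ Morrey estimate, which converts this divergence-form smallness into $\|\psi_\epsilon\|_{C^{1,\alpha}(\overline\Omega)}=O(\epsilon^\alpha)$, whence $D_n\varphi_\epsilon(-e_n)\to D_n\varphi(-e_n)<0$. Fixing $\epsilon$ small and comparing $\cu_\epsilon$ with $\theta_\epsilon\varphi_\epsilon$ finishes the argument. In short, the $C^{1,\alpha}$ estimate is the mechanism that extracts gradient information at the boundary from a divergence-form right-hand side; your sub-barrier scheme attempts to get this from the weak maximum principle alone, which controls only $L^\infty$ and cannot close.
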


\begin{proof} 
Set $\Omega = B_{1}(0) \setminus \overline{B_{\frac{1}{2}}(0)},$ and using (i) define $J,k,K \in (0,\infty)$ by
\begin{equation} \label{constants}
\begin{aligned}
J = & \sum_{i,j}^{n} \| a^{ij} \|_{C^{0,\alpha}(\overline{\Omega})} + \sum_{i=1}^{n} \left( \| b^{i} \|_{C^{0,\alpha}(\overline{\Omega})} + \| c^{i} \|_{L^{1,\alpha}(\Omega)} \right) + \| d \|_{L^{1,\alpha}(\Omega)}, \\
k = & \sum_{i=1}^{n} \left( \| b^{i} \|_{L^{q}(\Omega)} + \| c^{i} \|_{L^{q}(\Omega)} \right) + \| d \|_{L^{\frac{q}{2}}(\Omega)}, \\
K = & \sum_{i,j}^{n} \| a^{ij} \|_{C^{0,\alpha}(\overline{\Omega})} + \sum_{i=1}^{n} \left( \| b^{i} \|_{L^{q}(\Omega)} + \| c^{i} \|_{L^{q}(\Omega)} \right) + \| d \|_{L^{\frac{q}{2}}(\Omega)}; 
\end{aligned}
\end{equation}
note that we used Remark \ref{Morreyspaceremark} to conclude $c^{i} \in L^{1,\alpha}(\Omega).$ The proof now proceeds through five major steps.

{\bf Step 1:} Freezing at the origin and the barrier $\varphi.$

\medskip

Consider the operator $L$ given by
$$Lu = \sum_{i,j=1}^{n} a^{ij}(-e_{n}) D_{ij} u \text{ over } \Omega.$$
Then (i),(ii),(iv) imply $L$ satisfies (6.1),(6.2) of \cite{GT83}. Applying Theorems 6.14 of \cite{GT83} 
$$\text{(with $a^{ij},b^{i},c,f$ replaced respectively by $a^{ij}(-e_{n}),0,0,0$)}$$
over $\Omega = B_{1}(0) \setminus \overline{B_{\frac{1}{2}}(0)}$ with operator $L,$ we conclude the Dirichlet problem
$$\begin{array}{lr}
Lu=0 \text{ in } \Omega, & u = \left\{ 
\begin{array}{cl} 
0 & \text{over } \partial B_{1}(0) \\ 
-1 & \text{over } \partial B_{\frac{1}{2}}(0)
\end{array} \right.
\end{array}$$
has a unique solution lying in $C^{2,\alpha}(\overline{\Omega}).$ If we let $\varphi \in C^{2,\alpha}(\overline{\Omega})$ be this unique solution, we conclude that $\varphi$ satisfies
\begin{equation} \label{varphi}
\begin{aligned}
& \sum_{i,j=1}^{n} a_{ij}(-e_{n}) D_{ij} \varphi = 0 \text{ over } \Omega \\
& \text{with } \varphi(x) = \left\{ 
\begin{array}{cl} 
0 & \text{for } x \in \partial B_{1}(0) \\ 
-1 & \text{for } x \in \partial B_{\frac{1}{2}}(0).
\end{array} \right.
\end{aligned}
\end{equation}
Using again (i),(ii),(iv) we see $L$ satisfies (3.1),(3.2),(3.3) of \cite{GT83}. Thus, the strong maximum principle, see Theorem 3.5 of \cite{GT83}
$$\text{(with $a^{ij},b^{i},c,f$ replaced respectively by $a^{ij}(-e_{n}),0,0,0$),}$$
implies $\varphi(x) \in (-1,0)$ for all $x \in \Omega$. This now means the classical Hopf boundary point lemma, see Lemma 3.4 of \cite{GT83}
$$\text{(with $x_{0},a^{ij},b^{i},c,f$ replaced respectively by $-e_{n},a^{ij}(-e_{n}),0,0,0$),}$$
implies $D_{n} \varphi(-e_{n}) < 0.$

{\bf Step 2:} Scaling and the barrier $\varphi_{\epsilon}.$

\medskip

For each $\epsilon \in (0,\frac{1}{4})$ and $i,j \in \{ 1,\ldots,n\}$ define (over $\overline{\Omega}$ or $\Omega$)
\begin{equation} \label{scaling}
\begin{aligned} 
\cu_{\epsilon}(x) &= \cu(\epsilon (x+e_{n})-e_{n}) & & \\
a^{ij}_{\epsilon}(x) &= a^{ij}(\epsilon (x+e_{n})-e_{n}) & b^{i}_{\epsilon}(x) &= \epsilon b^{i}(\epsilon (x+e_{n})-e_{n}) \\
c^{i}_{\epsilon}(x) &= \epsilon c^{i}(\epsilon (x+e_{n})-e_{n}) & d_{\epsilon}(x) &= \epsilon^{2} d(\epsilon (x+e_{n})-e_{n}).
\end{aligned}
\end{equation}
Observe that (using the change of variables $y=\epsilon x$ and Definition \ref{Morreyspace})
\begin{equation} \label{scalingnorms}
\begin{aligned}
\| a^{ij}_{\epsilon} \|_{C^{0,\alpha}(\overline{\Omega})} & \leq \| a^{ij} \|_{C^{0,\alpha}(\overline{\Omega})}, & & \\
\| b^{i}_{\epsilon} \|_{C^{0,\alpha}(\overline{\Omega})} & \leq \epsilon \| b^{i} \|_{C^{0,\alpha}(\overline{\Omega})}, & \| b^{i}_{\epsilon} \|_{L^{q}(\Omega)} & \leq \epsilon^{\alpha} \| b^{i} \|_{L^{q}(\Omega)}, \\
\| c^{i}_{\epsilon} \|_{L^{1,\alpha}(\Omega)} & \leq \epsilon^{\alpha} \| c^{i} \|_{L^{1,\alpha}(\Omega)}, & \| c^{i}_{\epsilon} \|_{L^{q}(\Omega)} & \leq \epsilon^{\alpha} \| c^{i} \|_{L^{q}(\Omega)}, \\
\| d_{\epsilon} \|_{L^{1,\alpha}(\Omega)} & \leq \epsilon^{1+\alpha} \| d \|_{L^{1,\alpha}(\Omega)}, & \| d_{\epsilon} \|_{L^{\frac{q}{2}}(\Omega)} & \leq \epsilon^{2\alpha} \| d \|_{L^{\frac{q}{2}}(\Omega)},
\end{aligned}
\end{equation}
for each $i,j \in \{ 1,\ldots,n \},$ since $\epsilon \in (0,\frac{1}{4})$ implies $\{ \epsilon (x+e_{n})-e_{n}: x \in \Omega \} \subset \Omega.$ We as well have by (ii),(iii)
\begin{equation} \label{scalingassumptions}
\begin{aligned}
& \text{$\{ a^{ij}_{\epsilon} \}$ are uniformly elliptic over $\Omega$ with respect to $\lambda,$} \\
& \text{$\{ b^{i}_{\epsilon} \}_{i=1}^{n},d_{\epsilon}$ are weakly non-positive over $\Omega.$}
\end{aligned}
\end{equation}
Using \eqref{scalingnorms},\eqref{scalingassumptions} we conclude by Lemma \ref{Morreyexistence} that for each $\epsilon \in (0,\frac{1}{4})$ there is $\varphi_{\epsilon} \in C^{1,\alpha}(\overline{\Omega})$ which is a weak solution over $\Omega$ of the equation
\begin{equation} \label{varphiepsilon}
\begin{aligned}
& \sum_{i,j=1}^{n} D_{i} \left( a^{ij}_{\epsilon} D_{j} \varphi_{\epsilon} + b^{i}_{\epsilon} \varphi_{\epsilon} \right) + \sum_{i=1}^{n} c^{i}_{\epsilon} D_{i} \varphi_{\epsilon} + d_{\epsilon} \varphi_{\epsilon} = 0 \\
& \text{with } \varphi_{\epsilon}|_{\partial \Omega} = \varphi|_{\partial \Omega} \text{ and } \varphi_{\epsilon} \in [-1,0] \text{ for each } x \in \overline{\Omega}.
\end{aligned}
\end{equation}

{\bf Step 3:} Comparing $\varphi$ and $\varphi_{\epsilon}.$

\medskip

Define the functions
\begin{equation} \label{gf}
g_{\epsilon} = - \sum_{i=1}^{n} c^{i}_{\epsilon} D_{i} \varphi - d_{\epsilon} \varphi \text{ and }
f^{i}_{\epsilon} = - \sum_{j=1}^{n} (a^{ij}_{\epsilon} - a^{ij}_{\epsilon}(-e_{n})) D_{j} \varphi - b^{i}_{\epsilon} \varphi  
\end{equation}
for $i \in \{ 1,\ldots,n \}.$ Then \eqref{varphi},\eqref{varphiepsilon},\eqref{gf} imply that $\psi_{\epsilon} = \varphi_{\epsilon}-\varphi \in C^{1,\alpha}(\overline{\Omega})$ is a weak solution over $\Omega$ of the equation
\begin{equation} \label{psiepsilon}
\begin{aligned}
& \sum_{i,j=1}^{n} D_{i}( a^{ij}_{\epsilon} D_{j} \psi_{\epsilon} + b^{i}_{\epsilon} \psi_{\epsilon}) + \sum_{i=1}^{n} c^{i}_{\epsilon} D_{i} \psi_{\epsilon} + d_{\epsilon} \psi_{\epsilon} = g_{\epsilon} + \sum_{i=1}^{n} D_{i} f^{i}_{\epsilon} \\
& \text{with } \psi_{\epsilon}|_{\partial \Omega}=0.
\end{aligned}
\end{equation}

We wish to apply Lemma \ref{Morreyestimate} to $\psi_{\epsilon}.$ Before we do so, we will use Lemma \ref{weakmaximumprinciple} to estimate $\| \psi_{\epsilon} \|_{L^{1}(\Omega)}.$ For this, we make the following three computations.

First, by \eqref{scalingnorms}, $\epsilon \in (0,\frac{1}{4}),$ $\alpha = 1-\frac{n}{q}>0,$ and with $k$ as in \eqref{constants}
$$\sum_{i=1}^{n} \left( \| b^{i}_{\epsilon} \|_{L^{q}(\Omega)} + \| c^{i}_{\epsilon} \|_{L^{q}(\Omega)} \right) + \| d_{\epsilon} \|_{L^{\frac{q}{2}}(\Omega)} \leq k.$$

Second, using \eqref{gf}, $\varphi \in C^{2,\alpha}(\overline{\Omega})$ by \eqref{varphi}, H$\ddot{\text{o}}$lder's inequality, and \eqref{scalingnorms}
$$\begin{aligned}
\| g_{\epsilon} \|_{L^{\frac{q}{2}}(\Omega)} & \leq \| \varphi \|_{C^{1}(\Omega)} \left( \sum_{i=1}^{n} \| c^{i}_{\epsilon} \|_{L^{\frac{q}{2}}(\Omega)} + \| d_{\epsilon} \|_{L^{\frac{q}{2}}(\Omega)} \right) \\
& \leq \| \varphi \|_{C^{1}(\Omega)} \left( \sum_{i=1}^{n} \epsilon^{\alpha} \omega_{n}^{\frac{1}{q}} \| c^{i} \|_{L^{q}(\Omega)} + \epsilon^{2\alpha} \| d \|_{L^{\frac{q}{2}}(\Omega)} \right). \\
\end{aligned}$$

Third, we similarly compute for each $i \in \{ 1,\ldots,n \}$ using \eqref{gf},\eqref{scaling},\eqref{scalingnorms} 
$$\begin{aligned}
\| f^{i}_{\epsilon} \|_{L^{q}(\Omega)} & \leq \| \varphi \|_{C^{1}(\Omega)} \left( \sum_{j=1}^{n} \omega_{n}^{\frac{1}{q}} \| a^{ij}_{\epsilon}-a^{ij}_{\epsilon}(-e_{n}) \|_{C(\Omega)} + \| b^{i}_{\epsilon} \|_{L^{q}(\Omega)} \right) \\
& \leq \| \varphi \|_{C^{1}(\Omega)} \left( \sum_{j=1}^{n} \epsilon^{\alpha} 2^{\alpha}  \omega_{n}^{\frac{1}{q}} \| a^{ij} \|_{C^{0,\alpha}(\overline{\Omega})} + \epsilon^{\alpha} \| b^{i} \|_{L^{q}(\Omega)} \right).
\end{aligned}$$

These three computations together with \eqref{psiepsilon} imply that we can apply Lemma \ref{weakmaximumprinciple} (with $\cu=\psi_{\epsilon},-\psi_{\epsilon}$) to conclude
$$\begin{aligned}
\sup_{\Omega} |\psi_{\epsilon}| \leq & C_{\ref{weakmaximumprinciple}} \left( \| g_{\epsilon} \|_{L^{\frac{q}{2}}(\Omega)} + \sum_{i=1}^{n} \| f^{i}_{\epsilon} \|_{L^{q}(\Omega)} \right) \\
\leq & C_{\ref{weakmaximumprinciple}} \| \varphi \|_{C^{1}(\Omega)} \left( \sum_{i=1}^{n} \epsilon^{\alpha} \omega_{n}^{\frac{1}{q}} \| c^{i} \|_{L^{q}(\Omega)} + \epsilon^{2\alpha} \| d \|_{L^{\frac{q}{2}}(\Omega)} \right) \\
& + C_{\ref{weakmaximumprinciple}} \| \varphi \|_{C^{1}(\Omega)} \left( \sum_{i,j=1}^{n} \epsilon^{\alpha} 2^{\alpha}  \omega_{n}^{\frac{1}{q}} \| a^{ij} \|_{C^{0,\alpha}(\overline{\Omega})} + \sum_{i=1}^{n} \epsilon^{\alpha} \| b^{i} \|_{L^{q}(\Omega)} \right) \\
\leq & \epsilon^{\alpha} C_{\ref{weakmaximumprinciple}} \max \{ 2^{\alpha} \omega_{n}^{\frac{1}{q}},1 \} \| \varphi \|_{C^{1}(\Omega)} K
\end{aligned}$$
where $C_{\ref{weakmaximumprinciple}} = C_{\ref{weakmaximumprinciple}}(n,q,\lambda,k)$ and $k,K$ as in \eqref{constants} do not depend on $\epsilon.$ Thus
\begin{equation} \label{psiepsilonL1}
\| \psi_{\epsilon} \|_{L^{1}(\Omega)} \leq \epsilon^{\alpha} C_{\ref{weakmaximumprinciple}} \max \{ 2^{\alpha} \omega_{n}^{1+\frac{1}{q}},\omega_{n} \} \| \varphi \|_{C^{1}(\Omega)} K.
\end{equation}

Now we shall use Lemma \ref{Morreyestimate}. For this we make three computations.

First, using \eqref{scalingnorms}, $\epsilon \in (0,\frac{1}{4}),$ $\alpha = 1-\frac{n}{q}>0,$ and with $J$ as in \eqref{constants}
$$\sum_{i,j=1}^{n} \| a^{ij}_{\epsilon} \|_{C^{0,\alpha}(\overline{\Omega})} + \sum_{i=1}^{n} \left( \| b^{i}_{\epsilon} \|_{C^{0,\alpha}(\overline{\Omega})} + \| c^{i}_{\epsilon} \|_{L^{1,\alpha}(\Omega)} \right) + \| d_{\epsilon} \|_{L^{1,\alpha}(\Omega)} \leq J.$$

Second, we compute using Definition \ref{Morreyspace} and \eqref{gf},\eqref{scalingnorms}
$$\begin{aligned}
\| g_{\epsilon} \|_{L^{1,\alpha}(\Omega)} \leq & \| \varphi \|_{C^{1}(\Omega)} \left( \sum_{i=1}^{n} \| c^{i}_{\epsilon} \|_{L^{1,\alpha}(\Omega)} + \| d_{\epsilon} \|_{L^{1,\alpha}(\Omega)} \right) \\
\leq & \| \varphi \|_{C^{1}(\Omega)} \left( \sum_{i=1}^{n} \epsilon^{\alpha} \| c^{i} \|_{L^{1,\alpha}(\Omega)} + \epsilon^{2 \alpha} \| d \|_{L^{1,\alpha}(\Omega)} \right).
\end{aligned}$$

Third, we compute for each $i \in \{ 1,\ldots,n \}$ using \eqref{scaling},\eqref{scalingnorms} 
$$\begin{aligned}
\| f^{i}_{\epsilon} \|_{C^{0,\alpha}(\overline{\Omega})} \leq & \| \varphi \|_{C^{1,\alpha}(\overline{\Omega})} \left( \sum_{j=1}^{n} \| a^{ij}_{\epsilon}-a^{ij}_{\epsilon}(-e_{n})\|_{C^{0,\alpha}(\overline{\Omega})} + \| b^{i}_{\epsilon} \|_{C^{0,\alpha}(\overline{\Omega})} \right) \\
\leq & \| \varphi \|_{C^{1,\alpha}(\overline{\Omega})} \left( \sum_{j=1}^{n} \epsilon^{\alpha} \| a^{ij} \|_{C^{0,\alpha}(\overline{\Omega})} + \epsilon^{\alpha} \| b^{i} \|_{C^{0,\alpha}(\overline{\Omega})} \right).
\end{aligned}$$

These three computations, \eqref{psiepsilon}, Lemma \ref{Morreyestimate}, and \eqref{psiepsilonL1} imply
$$\begin{aligned}
\| \psi_{\epsilon} \|_{C^{1,\alpha}(\overline{\Omega})} \leq & C_{\ref{Morreyestimate}} \left( \| \psi_{\epsilon} \|_{L^{1}(\Omega)} + \| g_{\epsilon} \|_{L^{1,\alpha}(\Omega)} + \sum_{i=1}^{n} \| f^{i}_{\epsilon} \|_{C^{0,\alpha}(\overline{\Omega})} \right) \\
\leq & \epsilon^{\alpha} C_{\ref{Morreyestimate}} C_{\ref{weakmaximumprinciple}} \max \{ 2^{\alpha} \omega_{n}^{1+\frac{1}{q}},\omega_{n} \} \| \varphi \|_{C^{1}(\Omega)} K \\
& + C_{\ref{Morreyestimate}} \| \varphi \|_{C^{1}(\Omega)} \left( \sum_{i=1}^{n} \epsilon^{\alpha} \| c^{i} \|_{L^{1,\alpha}(\Omega)} + \epsilon^{2 \alpha} \| d \|_{L^{1,\alpha}(\Omega)} \right) \\
& + C_{\ref{Morreyestimate}} \| \varphi \|_{C^{1,\alpha}(\overline{\Omega})} \left( \sum_{i,j=1}^{n} \epsilon^{\alpha} \| a^{ij} \|_{C^{0,\alpha}(\overline{\Omega})} + \sum_{i=1}^{n} \epsilon^{\alpha} \| b^{i} \|_{C^{0,\alpha}(\overline{\Omega})} \right) \\
\leq & \epsilon^{\alpha} C_{\ref{Morreyestimate}} C_{\ref{weakmaximumprinciple}} \max \{ 2^{\alpha} \omega_{n}^{1+\frac{1}{q}},\omega_{n} \} \| \varphi \|_{C^{1}(\Omega)} K \\
& + \epsilon^{\alpha} C_{\ref{Morreyestimate}} \| \varphi \|_{C^{1,\alpha}(\overline{\Omega})} J
\end{aligned}$$
where $C_{\ref{Morreyestimate}} = C_{\ref{Morreyestimate}}(n,\lambda,J,\alpha)$ and $J$ as in \eqref{constants} do not depend on $\epsilon.$ Recalling that $C_{\ref{weakmaximumprinciple}}=C_{\ref{weakmaximumprinciple}}(n,q,\lambda,k)$ and $k,K$ as in \eqref{constants} do not depend on $\epsilon,$ then
\begin{equation} \label{psiepsilonderivative}
\lim_{\epsilon \rightarrow 0} |D_{n}\varphi_{\epsilon}(-e_{n})-D_{n}\varphi(-e_{n})| \leq \lim_{\epsilon \rightarrow 0} \| \psi_{\epsilon} \|_{C^{1,\alpha}(\overline{\Omega})} = 0.
\end{equation}

{\bf Step 4:} Fixing $\epsilon$ and comparing $\cu_{\epsilon}$ and $\varphi_{\epsilon}.$

\medskip

By {\bf Step 1} and \eqref{psiepsilonderivative}, we can fix $\epsilon \in (0,\frac{1}{4})$ so that
\begin{equation} \label{varphiepsilonderivative}
D_{n} \varphi_{\epsilon}(-e_{n}) < 0.
\end{equation}
Recalling $\cu \in C(\overline{B_{1}(0)})$ with $\cu(x)>\cu(-e_{n})=0$ for $x \in B_{1}(0),$ we can define $\hat{\cu}_{\epsilon} \in C(\overline{\Omega}) \cap W^{1,2}(\Omega)$ by
\begin{equation} \label{uepsilon}
\hat{\cu}_{\epsilon} = (\cu_{\epsilon}+\theta_{\epsilon} \varphi_{\epsilon}) \text{ with } \theta_{\epsilon} = \inf_{\partial B_{\frac{1}{2}}(0)} \cu_{\epsilon}>0;
\end{equation}

Observe by \eqref{scaling} that $\cu_{\epsilon}$ is a weak solution over $\Omega$ of the equation
$$\sum_{i,j=1}^{n} D_{i} \left( a^{ij}_{\epsilon} D_{j}\cu_{\epsilon} + b^{i}_{\epsilon} \cu_{\epsilon} \right) + \sum_{i=1}^{n} c^{i}_{\epsilon} D_{i}\cu_{\epsilon} + d_{\epsilon}\cu_{\epsilon} \leq 0,$$
Then \eqref{varphi},\eqref{varphiepsilon},\eqref{uepsilon} imply $\hat{\cu}_{\epsilon}$ is a weak solution over $\Omega$ of the equation
$$\sum_{i,j=1}^{n} D_{i} \left( a^{ij}_{\epsilon} D_{j} \hat{\cu}_{\epsilon} +b^{i}_{\epsilon} \hat{\cu}_{\epsilon} \right) + \sum_{i=1}^{n} c^{i}_{\epsilon} D_{i} \hat{\cu}_{\epsilon} + d_{\epsilon} \hat{\cu}_{\epsilon} \leq 0 \text{ with } \hat{\cu}_{\epsilon}|_{\partial \Omega} \geq 0.$$
We conclude by Lemma \ref{weakmaximumprinciple} that $\inf_{\Omega} \hat{\cu}_{\epsilon} \geq 0.$

{\bf Step 5:} Computing the derivative of $\cu$ at the origin.

Using \eqref{scaling},\eqref{varphiepsilonderivative},\eqref{uepsilon} and $\inf_{\Omega} \hat{\cu}_{\epsilon} \geq 0$ we conclude
$$\begin{aligned}
\liminf_{h \searrow 0} \frac{\cu((h-1)e_{n})}{h} = & \liminf_{h \searrow 0} \frac{\cu_{\epsilon}((\frac{h}{\epsilon}-1)e_{n})}{h} \\
\geq & \liminf_{h \searrow 0} \frac{-\theta_{\epsilon} \varphi_{\epsilon}((\frac{h}{\epsilon}-1)e_{n})}{h} = \frac{-\theta_{\epsilon}}{\epsilon} D_{n} \varphi_{\epsilon}(-e_{n}) > 0.
\end{aligned}$$
\end{proof}

It is typical to make some remarks relaxing some of the assumptions on the coefficients in certain cases; see for example Remark 1.2(b) of \cite{S15}. We make two more similar remarks.

\begin{remark} \label{hopfremark} 
We can relax some of the assumptions of Theorem \ref{hopflemma}.
\begin{itemize}
 \item[(i)] We need not assume $\alpha = 1-\frac{n}{q},$ it merely suffices that
$$\begin{array}{ccc}
a^{ij},b^{i} \in C^{0,\alpha}(\overline{B_{1}(0)}),
&
c^{i} \in L^{q}(B_{1}(0)),
&
d \in L^{\frac{q}{2}}(B_{1}(0)) \cap L^{1,\alpha}(B_{1}(0))
\end{array}$$
with $q>n$ and general $\alpha \in (0,1).$ 
 \item[(ii)] We can more generally assume $\cu(-e_{n}) \leq 0.$ We can see this by setting $\hat{\cu}(x)=\cu(x)-\cu(-e_{n})$ for $x \in \overline{\Omega},$ and noting that for $\zeta \in C^{1}_{c}(\Omega;[0,\infty))$
$$\begin{aligned}
\int & \sum_{i,j=1}^{n} a^{ij}D_{j}\hat{\cu} D_{i} \zeta + \sum_{i=1}^{n} \left( b^{i} \hat{\cu} D_{i} \zeta - c^{i} (D_{i} \hat{\cu}) \zeta \right) - d \hat{\cu} \zeta \dx \\
= & \int \sum_{i,j=1}^{n} a^{ij}D_{j}\cu D_{i} \zeta + \sum_{i=1}^{n} \left( b^{i}\cu D_{i} \zeta - c^{i} (D_{i}\cu) \zeta \right) - d\cu \zeta \dx \\
& + \cu(-e_{n}) \int d \zeta - \sum_{i=1}^{n} b^{i} D_{i} \zeta \dx \geq 0
\end{aligned}$$
since $\{ b_{i} \}_{i=1}^{n},d$ are weakly non-positive over $\Omega.$
\end{itemize}
\end{remark}

\end{document}